\NeedsTeXFormat{LaTeX2e}

\documentclass[american]{amsart}

\usepackage[french,english]{babel}
\usepackage{a4wide}
\usepackage[utf8]{inputenc}
\usepackage{pb-diagram}
\usepackage{graphicx}
\usepackage{wrapfig}
\usepackage{subfig}
\usepackage[hyperref]{hyperref}

\usepackage{amssymb}
\usepackage{color}

\renewcommand{\phi}{\varphi}
\newcommand{\C}{{\mathbb{C}}}

\newcommand{\R}{{\mathbb{R}}}
\newcommand{\RP}{{\mathbb{R}\mathrm{P}}}

\newcommand{\Z}{{\mathbb{Z}}}

\renewcommand{\epsilon}{\varepsilon}
\renewcommand{\theta}{\vartheta}
\renewcommand{\S}{{\mathbb{S}}}
\newcommand{\T}{{\mathbb{T}}}

\newcommand{\connsum}{{\mathbin{\#}}}

\newcommand{\D}{{\mathbb{D}}}

\newcommand{\solutions}{{\widetilde{\mathcal{M}}}}
\newcommand{\moduli}{{\mathcal{M}}}

\newcommand{\norm}[1]{{\lVert #1\rVert}}
\newcommand{\abs}[1]{{\left\lvert #1\right\rvert}}

\newcommand{\restricted}[2]{{\left.{#1}\right|_{#2}}}

\newcommand{\lcan}{{\lambda_{\mathrm{can}}}}

\newcommand{\POS}{{\mathbf{q}}}
\newcommand{\MOM}{{\mathbf{p}}}


\DeclareMathOperator{\Aut}{Aut}

\DeclareMathOperator{\evaluation}{ev}

\DeclareMathOperator{\RealPart}{Re}

\numberwithin{equation}{section}

\theoremstyle{plain}
\newtheorem{theorem}{Theorem}

\newtheorem{lemma}{Lemma}

\newtheorem{proposition}{Proposition}
\newtheorem*{WSconjecture}{Weinstein conjecture}

\theoremstyle{remark}
\newtheorem{remark}{Remark}
\newtheorem{example}{Example}

\theoremstyle{definition}
\newtheorem*{definition}{Definition}


\begin{document}

\bibliographystyle{amsalpha}

\title[Weinstein conjecture in higher dimensions]{The Weinstein
  conjecture in the presence of submanifolds having a Legendrian
  foliation}

\author[K.\ Niederkrüger]{Klaus Niederkrüger}

\email[K.\ Niederkrüger]{niederkr@math.univ-toulouse.fr}

\address[K.\ Niederkrüger]{
  Institut de mathématiques de Toulouse\\
  Université Paul Sabatier -- Toulouse III\\
  118 route de Narbonne\\
  F-31062 Toulouse Cedex 9\\
  France}

\author[A.\ Rechtman]{Ana Rechtman}

\email[A.\ Rechtman]{rechtman@math.northwestern.edu}

\address[A.\ Rechtman]{
  Department of Mathematics \\
  Northwestern University \\
  2033 Sheridan Road \\
  Evanston, IL 60208-2730 \\
  United States of America}

\begin{abstract}
  Helmut Hofer introduced in '93 a novel technique based on
  holomorphic curves to prove the Weinstein conjecture.  Among the
  cases where these methods apply are all contact $3$--manifolds
  $(M,\xi)$ with $\pi_2(M) \ne 0$.  We modify Hofer's argument to
  prove the Weinstein conjecture for some examples of higher
  dimensional contact manifolds.  In particular, we are able to show
  that the connected sum with a real projective space always has a
  closed contractible Reeb orbit.
\end{abstract}

\maketitle

\setcounter{section}{-1}
\section{Introduction}

Let $(M,\xi)$ be a contact manifold with contact form $\alpha$.  The
associated Reeb field $R_\alpha$ is the unique vector field that
satisfies the equations
\begin{equation*}
  \alpha(R_\alpha) = 1 \quad\text{ and }\quad
  \iota_{R_\alpha}d\alpha = 0
\end{equation*}
everywhere.

\begin{WSconjecture}
  Let $(M,\xi)$ be a closed contact manifold, and choose any contact
  form $\alpha$ with $\xi = \ker \alpha$.  The Reeb field $R_\alpha$
  associated to $\alpha$ always has a closed orbit.
\end{WSconjecture}

In his seminal paper \cite{HoferWeinstein}, Helmut Hofer found a
strong relation between the dynamics of the Reeb field and holomorphic
curves in symplectizations. Initially Hofer proved the Weinstein
conjecture using these methods in three cases, namely the conjecture
holds for a closed contact $3$--manifold $(M,\xi)$, if $M$ is
diffeomorphic to $\S^3$, if $\xi$ is overtwisted or if $\pi_2(M) \ne
0$.

A generalization of the second case to higher dimensions has been
achieved in \cite{AlbersHoferWeinsteinPlastikstufe} for contact
structures that have a Plastikstufe.  We will try to generalize the
third one.  In order to justify our hypothesis, let us recall the key
steps in Hofer's proof.  The non-triviality of the second homotopy
group combined with the assumption that the contact structure is
tight, allow us to find a non-contractible embedded sphere whose
characteristic foliation has only two elliptic singularities and no
closed leaves. Near the
singularities, an explicit Bishop family of holomorphic disks can be
constructed, and it can be proved that the disks produce a finite
energy plane.  The existence of a finite energy plane in a
symplectization of the manifold implies the existence of a
contractible periodic Reeb orbit.

Our generalization of Hofer's theorem for contact $(2n+1)$--manifolds
replaces the $2$--sphere by an embedded $(n+1)$--submanifold such that
the contact structure restricts to an open book decomposition.
Following Hofer's ideas, we will prove that if such a submanifold
represents a non-trivial homology class then there exists a periodic
contractible Reeb orbit.

Among the examples where we are able to find such submanifolds, are
the connected sum of any contact manifold $M$ with 
\begin{itemize}
\item [(a)] the projetive space with its standard contact structure
\item [(b)] certain subcritically fillable manifolds as for example $\S^n
  \times \S^{n+1}$ or $\T^n\times \S^{n+1}$.
\end{itemize}

\subsection*{Acknowledgments}

Several people helped us writing this article: In particular we would
like to thank Chris Wendl for spotting a mistake in a preliminary
version and Janko Latschev for solving this problem.  We also received
helpful comments by Emmanuel Giroux and Leonid Polterovich.

The first author would like to thank the University of Chicago, the
MSRI, and the CCCI for partially funding his stay in Chicago, where
the initial ideas of this paper have been developed.

The second author would like to thank the CONACyT for her postdoctoral
fellowship.

\section{The main criteria}

\begin{definition}
  An \textbf{open book decomposition} $(\theta, B)$ of a manifold $N$
  consists of
 \begin{itemize}
 \item [(i)] a \emph{proper} codimension~$2$ submanifold $B\subset N$
   that has a tubular neighborhood that is diffeomorphic to $B\times
   \C$, and
 \item [(ii)] a \emph{proper} fibration $\theta\colon (N\setminus B)
   \to \S^1$ such that the map~$\theta$ agrees on the neighborhood
   $B\times \C$ with the angular coordinate $e^{i\phi}$ of the
   $\C$--factor.
  \end{itemize}

  The submanifold $B$ is called the \textbf{binding}, and the fibers
  of $\theta$ are called the \textbf{pages} of the open book.  From
  the definition it follows that the closure of a page $P$ in $N$ is a
  compact manifold with boundary~$B$.
\end{definition}

\begin{remark}
  Open book decompositions are typically only studied on closed
  manifolds, in which case the binding is also a closed manifold.  In
  this article, we will first restrict to closed manifolds, but then
  we will study closed manifolds whose universal cover admits an open
  book decomposition, and we do not want to suppose that the universal
  cover itself is a closed manifold.
\end{remark}

Assume $(M,\xi)$ is a contact $(2n+1)$--manifold.  A submanifold
$N\hookrightarrow M$ is called \textbf{maximally foliated by $\xi$} if
$\dim N = n+1$, and if the intersection $\xi\cap TN$ defines a
singular foliation on $N$.  The regular leaves of such a foliation are
locally Legendrian submanifolds.

\begin{definition}
  In the situation above, we say that $N \hookrightarrow (M,\xi)$
  carries a \textbf{Legendrian open book}, if the maximal foliation on
  $N$ defines an open book decomposition of $N$, i.e., the singular
  set $\bigl\{ p\in N\bigm|\, T_pN\subset \xi_p\bigr\}$ is the binding
  of an open book on $N$, and each regular leaf of the foliation
  corresponds to a page of the open book.
\end{definition}

The following notion is extensively studied in
\cite{WeafFillabilityHigherDimension} as a filling obstruction, here
we will only use it as a sufficient condition for the existence of a
closed contractible Reeb orbit.

\begin{definition}
  Let $N$ be a compact submanifold of $(M,\xi)$ that is maximally
  foliated by $\xi$, and has non-empty boundary $\partial N$ that can
  be written as a product manifold $\partial N \cong \S^1 \times L$.
  We say that $N$ carries a \textbf{Legendrian open book with
    boundary}, if the following conditions are satisfied by the
  foliation:
  \begin{itemize}
  \item [(i)] The singular set is the union of the boundary $\partial
    N$ and a closed (not necessarily connected) codimension~$2$
    submanifold $B \subset N \setminus \partial N$ with trivial normal
    bundle.
  \item [(ii)] There exists a submersion
    \begin{equation*}
      \theta\colon N \setminus B \to \S^1
    \end{equation*}
    that restricts on $\partial N \cong \S^1 \times L$ to the
    projection onto the first factor.
  \item [(iii)]The regular leaves of the Legendrian foliation $\xi\cap
    TN$ are the fibers of $\theta$ intersected with the interior of
    $N$.
  \item [(iv)] The neighborhood of $B$ has a trivialization $B\times
    \C$ for which the angular coordinate $e^{i\phi}$ on $\C$ agrees
    with the map~$\theta$.
  \end{itemize}
\end{definition}

\begin{remark}
  There are two common definitions of the overtwisted disk; according
  to one version the boundary is a regular compact leaf of the
  foliation, but there is a second version where the foliation is
  singular along the boundary of the disk.  This second definition is
  an example of a Legendrian open book with boundary.  By a small
  perturbation it is always possible to move from one version to the
  other one, so that both definitions are equivalent.  Similarly, it
  is possible to deform a plastikstufe to obtain a Legendrian open
  book with boundary, so that the definition above includes
  $PS$--overtwisted manifolds.
\end{remark}

\begin{theorem}\label{thrm: main result proper open book}
  Let $(M,\xi)$ be a closed contact manifold, and let $N$ be a compact
  submanifold.
  \begin{itemize}
  \item [(i)] If $\xi$ induces a Legendrian open book on $N$ (without
    boundary), and if $\xi$ admits a contact form $\alpha$ without
    closed contractible Reeb orbits, then it follows that $N$
    represents the trivial homology class in $H_{n+1}(M, \Z_2)$.
  \item [(ii)] If $\xi$ induces a Legendrian open book with boundary
    on $N$, then every contact form $\alpha$ on $(M, \xi)$ has a
    closed contractible Reeb orbit.
  \end{itemize}
\end{theorem}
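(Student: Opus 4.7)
The plan is to adapt Hofer's original argument from \cite{HoferWeinstein} to this higher-dimensional setting via a Bishop family of holomorphic disks in the symplectization $W=\R\times M$, equipped with the exact symplectic form $d(e^t\alpha)$ and a compatible almost complex structure $J$ adapted to $\alpha$. Because the regular leaves of the Legendrian foliation $\xi\cap TN$ are Legendrian submanifolds of $(M,\xi)$, the slice $\{0\}\times N$ minus its singular locus is totally real in $W$ and thus provides suitable boundary conditions for $J$-holomorphic disks $u\colon(\D,\partial\D)\to(W,N)$. Near each point of the binding $B$, the local normal form $B\times\C$ in which the angular coordinate of $\C$ agrees with $\theta$ permits an explicit construction of small embedded Bishop disks whose boundaries loop once around a fiber of $B\times\C$. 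After quotienting by reparametrization and the $\R$-translation, this yields a non-empty moduli space $\moduli$ which is a smooth manifold of the expected dimension in a neighborhood of the Bishop family.

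Next, I would enlarge the Bishop family to the maximal connected component of this moduli space and analyze its compactness. Gromov compactness, together with exactness of the symplectization (which excludes sphere bubbles) and an energy argument for the component containing the Bishop disks (which excludes disk bubbles detaching along interior pages), reduces failure of compactness to three cases: (a) a disk collapses onto a point of $B$, which is the already-known end of the Bishop family; (b) the boundaries of a sequence of disks accumulate onto the singular boundary $\partial N$ (when it exists); or (c) a finite energy plane breaks off, corresponding to a closed contractible Reeb orbit.

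For part (ii), the singular boundary $\partial N\cong\S^1\times L$ is foliated by Legendrian circles $\S^1\times\{q\}$ coming from the product structure, and the integrated energy of the disks grows without bound when their boundaries attempt to degenerate onto $\partial N$; any such sequence therefore cannot converge to a tame limit in the moduli space, so Gromov compactness forces a finite energy plane to split off, producing the desired contractible Reeb orbit for every contact form. For part (i) there is no $\partial N$, so case (b) does not occur; if one assumes for contradiction that no closed contractible Reeb orbit exists, case (c) is also ruled out. Then $\moduli$ is compact with a single end consisting of Bishop disks collapsing onto $B$. Evaluating at boundary points of $\D$ gives a singular $\Z_2$-chain in $M$ of dimension one higher than $N$ whose boundary realizes $[N]$, forcing $[N]=0$ in $H_{n+1}(M,\Z_2)$.

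The principal obstacle is this compactness analysis: rigorously ruling out in case (ii) that disks can escape onto $\partial N$ without a finite energy plane splitting off, and establishing the precise moduli-space cobordism in case (i). Both steps require delicate local models near the singular set of the Legendrian foliation together with fine control over the boundary loops of disks in the family, in the spirit of the argument for $PS$-overtwisted manifolds of \cite{AlbersHoferWeinsteinPlastikstufe}. The remaining technical points to be executed carefully are transversality of the extended moduli space for a generic $J$ respecting these boundary conditions, and verifying that the boundary evaluation map in part (i) realizes $N$ with odd $\Z_2$-multiplicity.
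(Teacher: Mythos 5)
Your overall architecture --- a Bishop family near the binding, a moduli space of $J$--holomorphic disks with boundary on $\{0\}\times N$, Gromov compactness, and an evaluation--map chain argument for~(i) --- is the same as the paper's. The genuine gap is in the mechanism you propose for part~(ii). You claim that ``the integrated energy of the disks grows without bound when their boundaries attempt to degenerate onto $\partial N$''; this is false. For every $u\in\solutions$ one has, by Stokes, $E_\alpha(u)=\int_{\partial u}\alpha$, and since $\restricted{\alpha}{TN}=f\,d\theta$ with $f$ continuous and the boundary loop crossing each page exactly once, the energy is uniformly bounded by $2\pi\max_N f$; moreover $f$ vanishes on the singular set $\partial N$, so the $\alpha$--integral of boundary arcs near $\partial N$ tends to zero rather than blowing up. The paper handles $\partial N$ by a completely different device: a special almost complex structure near $\{0\}\times\partial N\cong\S^1\times L$ (from \cite[Section~5.3]{SizeTubularNeighborhood}) acting as a barrier that no holomorphic disk can enter. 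With that barrier in place and contractible Reeb orbits assumed absent, compactness holds everywhere, and the contradiction comes from a dimension count rather than from bubbling: one forms $\moduli=\solutions\times\S^1/\!\sim$, evaluates into $N$, takes the preimage of a generic path $\gamma$ from a binding component to a boundary component, and observes that the component $\moduli_0$ containing the Bishop family would be a compact $1$--manifold with exactly one end (the collapse onto $B$), which is impossible. Without the barrier your case~(b) is uncontrolled, and without the path/degree argument you have given no reason why the family must degenerate at all.

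Two further points. First, you quotient by the $\R$--translation, but there is no such action on $\solutions$: the boundary condition $\{0\}\times\stackrel{\circ}{N}$ sits at a fixed level, and the almost complex structure needed for the explicit Bishop family is not $t$--invariant near the binding; the paper quotients only by $\Aut(\D^2)$ and, for part~(i), keeps a marked point ($\moduli=\solutions\times\D^2/\!\sim$) precisely so that $\overline{\moduli}$ is the $(n+2)$--dimensional $\Z_2$--chain whose boundary realizes $[N]$. Second, the exclusion of disk bubbling is not an energy estimate in the usual sense but a homotopical one: since $\partial u$ meets every page exactly once, the boundary loop cannot decompose into several loops each with positive page--crossing number; and the statement that every disk entering a neighborhood of $B$ already lies in the Bishop family requires the intersection argument with the local holomorphic hypersurfaces $S_{z_0}$. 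These two facts are exactly what make $\overline{\moduli}$ a compact manifold with boundary and the boundary evaluation map $\restricted{\evaluation}{\partial\overline{\moduli}}\colon\partial\overline{\moduli}\to\{0\}\times N$ of degree~$1$ mod~$2$, i.e.\ they are where your final ``odd $\Z_2$--multiplicity'' claim actually gets proved.
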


\begin{remark}
  In the situation of Theorem~\ref{thrm: main result proper open
    book}.(ii), it also follows that $(M,\xi)$ does not admit a
  (semi-positive) strong symplectic filling in general, and under some
  cohomological condition it even excludes the existence of a
  \emph{weak} filling.  The proof of this fact is given in
  \cite{WeafFillabilityHigherDimension}.  We will call such a contact
  structure \textbf{$PS$--overtwisted}.
\end{remark}

\begin{remark}
  It should be possible to strengthen the conclusions of the theorem.
  For example, if both $N$ and the moduli space used in the proof of
  (i) are orientable, the coefficients for the homology group can be
  taken in $\Z$.
\end{remark}

\begin{proof}[Proof of Theorem~\ref{thrm: main result proper open
    book}]
  Following Hofer's idea for $3$--manifolds, we will study a moduli
  space of holomorphic disks in the symplectization of $(M,\alpha)$.
  To prove (i), we will then show that the union of these holomorphic
  disks represents a chain in $M$ whose boundary is homologous to the
  submanifold $N$.  The proof of (ii) is based on a contradiction to
  Gromov compactness as in \cite{AlbersHoferWeinsteinPlastikstufe},
  and we will only discuss it briefly at the end.

  \textbf{(i)} First, we have to choose a suitable almost complex
  structure $J$ on the symplectization
  \begin{equation*}
    \bigl(\R\times M,\, d(e^t\,\alpha)\bigr) \;.
  \end{equation*}
  We embed $(M,\alpha)$ as the $0$--level set $\{0\}\times M$, and
  define $J$ first in a neighborhood of the binding $B$ in
  $\{0\}\times N$, before extending it over all of $\R\times M$.  It
  was shown in \cite[Section~3]{NiederkruegerPlastikstufe} that the
  germ of the contact \emph{form} in a neighborhood of $B$ is
  completely determined by the foliation on $N$, or said otherwise,
  there is a neighborhood $U$ around the binding $B$ that is
  \emph{strictly} contactomorphic to a neighborhood $\widetilde U$ of
  the $0$--section in
  \begin{equation*}
    \Bigl(\R^3\times T^*B,\,  dz + \frac{1}{2}\, (x\,dy - y\, dx)
    + \lcan\Bigr) \;,
  \end{equation*}
  where $(x,y,z)$ are the standard coordinates on $\R^3$, and $\lcan$
  is the canonical $1$--form on $T^*B$.  The set $U\cap N$ corresponds
  in this model to the intersection of $\widetilde U$ with the
  submanifold $\{(x,y,0)\}\times B$.

  We will now study the following model for the symplectization of
  $U$: Let $W_1 = \C^2$ be the Stein manifold with standard complex,
  and symplectic structures, and with the plurisubharmonic function
  $h_1(z_1, z_2) = \abs{z_1}^2 + \abs{z_2}^2$.  To find a Weinstein
  structure on $T^*B$ choose a Riemannian metric $g$ on the binding
  $B$, then the cotangent bundle $W_2 = T^*B$ carries an induced
  Riemannian metric $\widetilde g$, and an exact symplectic structure
  $d\lcan$ given by the differential of the canonical $1$--form $\lcan
  := -\MOM\,d\POS$.  There is a unique almost complex structure $J_g$
  on $W_2$ that is compatible with $d\lcan$ and with the metric
  $\widetilde g$.  The function $h_2(\POS,\MOM) = \norm{\MOM}^2/2$ is
  $J_g$--plurisubharmonic and satisfies $dh_2\circ J_g = - \lcan$ (see
  also \cite[Appendix~B]{NiederkruegerPlastikstufe}).

  The product manifold $W = W_1\times W_2 = \C^2 \times T^*B$ is a
  Weinstein manifold with almost complex structure $J' = i\oplus J_g$,
  and plurisubharmonic function $h = h_1 + h_2$.  Its contact type
  boundary $M' := h^{-1}(1)$ contains the submanifold
  \begin{equation*}
    \Bigl\{\bigl(\sqrt{1-\abs{z}^2}, z; \POS,\mathbf{0}\bigr)\Bigm|\,
    \abs{z} < \epsilon\Bigr\} \cong \D^2\times B \;.
  \end{equation*}
  The natural contact structure $\ker (dh\circ J')$ on $M'$ induces a
  singular foliation on this submanifold that is diffeomorphic to the
  neighborhood of the binding of an open book, so that in fact the
  neighborhood of this submanifold in $W$ is symplectomorphic to a
  neighborhood of $\{0\}\times B$ in the symplectization, and the
  plurisubharmonic function $h$ coincides with $e^t$ on $\R\times M$.

  The pull-back of $J' = i\oplus J_g$ to the symplectization defines
  thus an almost complex structure in a neighborhood of the binding
  $\{0\}\times B$ in $\R\times M$, which we can easily extend to an
  almost complex structure on $(-\epsilon, \epsilon) \times M$ that is
  compatible with the symplectic form $d(e^t\,\alpha)$, and for which
  $dt\circ J = \alpha$.  Unfortunately this almost complex structure
  is not $t$--invariant, but we can extend $J$ to an almost complex
  structure that is tamed by $d(e^t\alpha)$ everywhere, restricts to
  $\xi$, and is $t$--invariant below a certain level set $\{-C\}
  \times M$ in the symplectization.

  With the chosen almost complex structure $J$, it is easy to
  explicitly write down a Bishop family of holomorphic disks in a
  neighborhood of $\{0\}\times B$, and to use an intersection argument
  to exclude the existence of other holomorphic disks in this
  neighborhood.  Namely, the Bishop family will be given in the model
  $\C^2 \times T^*B$ by the intersection of the $2$--planes
  \begin{equation*}
    E_{t_0, \POS_0} := \bigl\{(t_0,z;\POS_0,\mathbf{0})\bigm|\,
    \POS_0 \in B, t_0 < 1, z\in \C\bigr\}
  \end{equation*}
  with $h^{-1}\bigl((1-\epsilon, 1]\bigr)$.  The result gives for
  every point $\POS_0$ of the binding $B$ a $1$--dimensional family of
  round disks attached with their boundary to the foliated
  submanifold.  The radius of the disk decreases as $t_0 \to 1$, and
  in the limit the disks collapse to the point $\POS_0\in B$.  All of
  the disks are pairwise disjoint, and if we look at the space of
  \emph{parameterized} disks, we obtain thus a smooth
  $(n+3)$--dimensional manifold.

  To exclude the existence of other disks close to the binding, we use
  an intersection argument with the local foliation given by $(i\oplus
  J_g)$--holomorphic codimension~$2$ submanifolds
  \begin{equation*}
    S_{z_0} := \bigl\{(z_0,z)\bigm|\, z\in \C\bigr\} \times T^*B
  \end{equation*}
  with $\RealPart z_0 < 1$.  For more details see
  \cite[Section~3]{NiederkruegerPlastikstufe}.

  We will now look at the moduli space of holomorphic disks given as
  follows: Denote $N \setminus B$ by $\stackrel{\circ}{N}$, and let
  $\solutions$ be the space of all $J$--holomorphic maps
  \begin{equation*}
    u\colon \bigl(\D^2, \partial \D^2\bigr) \to
    \Bigl( (-\infty, 0] \times M,\,
    \{0\}\times \stackrel{\circ}{N} \Bigr) \;,
  \end{equation*}
  whose boundary $u\bigl(\partial \D^2\bigr)$ intersects every page
  of the open book on $N$ exactly once.  For simplicity we will
  restrict to the component of $\solutions$ that contains the Bishop
  family (for every component of the binding there is an independent
  Bishop family, but one result of our assumptions will be that all
  these families lie in the same component of the moduli space).

  Before producing a moduli space by taking a quotient of
  $\solutions$, we will briefly discuss Gromov compactness.  We claim
  that there is a uniform energy bound for all curves $u\in
  \solutions$.  The energy of a holomorphic curve $u$ in a
  symplectization is defined as
  \begin{equation*}
    E_\alpha(u) := \sup_{\phi\in \mathcal{F}}\int_u d\bigl(\phi\alpha\bigr)\;,
  \end{equation*}
  where $\mathcal{F}$ is the set of smooth functions $\phi\colon \R\to
  [0,1]$ with $\phi'\geq 0$.  Here we identify $\R$ with the
  $\R$--factor of the symplectization.

  Using Stokes' Theorem, we easily obtain for any holomorphic disk $u
  \in \solutions$ that
  \begin{equation*}
    E_\alpha(u) = \int_{\partial u} \alpha \;.
  \end{equation*}
  There is a continuous function $f\colon N \to [0,\infty)$ such that
  $\restricted{\alpha}{TN} = f\, d\theta$, where $\theta\colon
  N\setminus B \to \S^1$ is the fibration of the open book, and
  because the boundary of the curves $u\bigl(\partial \D^2\bigr)$
  crosses every page of the open book on $N$ exactly once, we obtain
  the energy bound
  \begin{equation*}
    E_\alpha(u) \le 2\pi\,\max_{x\in N} f(x) \;,
  \end{equation*}
  proving the claim.

  Let $(u_k)_k\subset \solutions$ be a sequence of holomorphic maps.
  The only disks that may intersect a small neighborhood of the
  binding $\{0\}\times B$ are the ones that lie in the Bishop family,
  and hence we will assume that all maps $u_k$ stay at finite distance
  from the binding $\{0\}\times B$, because otherwise it follows that
  the $u_k$ collapse to a point in $B$.

  \begin{proposition}\label{prop:limitcurve}
    Let $(u_{k_n})_n$ be a sequence of holomorphic maps whose image is
    bounded away from $\{0\}\times B$.  There there is a subsequence
    $(u_{k_n})_n$ and a family of biholomorphisms $\phi_n \in
    \Aut(\D^2)$, such that the reparameterized maps $(u_{k_n}\circ
    \phi_n)_n$ converge uniformly in $C^\infty$ to a map $u_\infty \in
    \solutions$.
  \end{proposition}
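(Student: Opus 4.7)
The plan is to apply the general SFT compactness framework of Bourgeois--Eliashberg--Hofer--Wysocki--Zehnder, adapted to disks with a totally real (Legendrian-foliated) boundary condition, and to systematically exclude every possible degeneration of the limit holomorphic building. The uniform energy bound $E_\alpha(u_k) \leq 2\pi\max_N f$ is already in hand, so after reparameterizing by a suitable sequence $\phi_n \in \Aut(\D^2)$ one extracts a subsequence converging to a holomorphic building; the task reduces to showing that this building consists of a single smooth disk.

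The first step is a uniform $C^0$--bound in the $t$--direction. By $J$--convexity of $t$ near the end, the maximum principle yields $t\circ u_{k_n}\leq 0$, and a subsequence with $\inf (t\circ u_{k_n})\to -\infty$ would, after rescaling, produce a lower SFT level containing a non-constant finite energy plane or cylinder asymptotic to a closed Reeb orbit of $\alpha$ in $M$. Any such asymptotic orbit inherits contractibility from the original disks, contradicting the standing no-contractible-orbit assumption of part (i). Hence the images of $u_{k_n}$ remain in a fixed compact region $[-C,0]\times M$, and no breaking into several SFT levels occurs.

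Next, I would rule out bubbling. Sphere bubbles are excluded by exactness of $d(e^t\alpha)$: any non-constant $J$--holomorphic sphere has strictly positive symplectic area by taming, but Stokes' theorem forces this area to vanish. For a disk bubble $v\colon (\D^2,\partial \D^2)\to (\R\times M,\, \{0\}\times (N\setminus B))$ obtained by rescaling near a boundary point, the boundary is contained in a single page of the Legendrian open book, because pages are connected leaves of $\xi\cap TN$ and the rescaling limit has connected boundary mapping into one leaf. Since $\alpha$ restricts to zero on any page, the same Stokes computation used in the energy estimate gives
\begin{equation*}
  \int_v d(e^t\alpha) \;=\; \int_{\partial v}\alpha \;=\; 0 \;,
\end{equation*}
so $v$ is constant by taming.

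With all degenerations excluded, the reparameterized subsequence converges in $C^\infty_{\text{loc}}$ to a smooth $J$--holomorphic disk $u_\infty$ with values in $((-\infty,0]\times M,\, \{0\}\times (N\setminus B))$, and in fact uniformly in $C^\infty$ by elliptic bootstrapping since the limit domain is compact. The topological condition that the boundary intersects every page exactly once is detected by the degree of $\theta\circ u|_{\partial \D^2}$, which is locally constant under $C^\infty$ convergence, so $u_\infty\in\solutions$. The main technical obstacle is the interplay between the $C^0$--bound in the $t$--direction and the exclusion of Reeb-orbit breakings, both of which depend essentially on the no-contractible-Reeb-orbit hypothesis of part (i); the exclusion of disk bubbles is comparatively soft and uses only that the pages are Legendrian.
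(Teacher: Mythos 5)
Your overall architecture (uniform energy bound, exclusion of level breaking via the no-contractible-Reeb-orbit hypothesis, exclusion of sphere bubbles via exactness of $d(e^t\alpha)$) matches the paper's, which runs the same case analysis with lighter machinery than full SFT compactness. But your exclusion of disk bubbles contains a genuine error. You claim that a boundary bubble $v$ has boundary contained in a single page because the pages are the connected leaves of $\xi\cap TN$ and $\partial v$ is connected. A connected curve in $N\setminus B$ need not lie in a leaf of the foliation --- it can be, and here necessarily is, transverse to the leaves. Indeed, for any non-constant $J$--holomorphic disk $u$ attached to $\{0\}\times(N\setminus B)$, the Hopf boundary point lemma applied to the subharmonic function $t\circ u$ (which attains its maximum $0$ on $\partial\D^2$) shows that $\alpha(\partial_\phi u)$ has a definite sign along the boundary; since $\restricted{\alpha}{TN}=f\,d\theta$ with $f>0$ away from the singular set, the boundary loop is \emph{strictly monotone} with respect to $\theta$ and therefore crosses every page at least once. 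Your lemma, if true, would apply verbatim to the Bishop disks themselves and force them to be constant, which they are not. So the Stokes computation $\int_{\partial v}\alpha=0$ is unjustified, and with it your entire disk-bubble exclusion.

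The correct argument --- the one the paper uses --- runs in the opposite direction: every non-constant disk component of a limit configuration contributes winding at least one to $\theta$ along its boundary, while the curves in $\solutions$ have total winding exactly one, so the limit cannot decompose into two or more non-constant disks; the boundary is ``undecomposable.'' This is precisely where the defining condition of $\solutions$ (crossing every page exactly once) enters. Contrary to your closing remark, the disk-bubble exclusion is not soft and does not follow from the Legendrianity of the pages alone; it is the step that uses the normalization of the moduli space.
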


  \begin{proof}
    Assume the conclusion is false, then the gradient of the
    reparameterized sequence is blowing up, and this would either lead
    to the existence of a holomorphic sphere, a finite energy plane,
    or a disk bubbling off.  Symplectizations never contain
    holomorphic spheres, and since by our assumption $(M,\alpha)$ does
    not have closed contractible Reeb orbits, we also have excluded
    the existence of finite energy planes.  Finally, bubbling of disks
    is not allowed because the maps in $\solutions$ cross every page
    of the open book on $N$ exactly once, and this implies that the
    boundary of the disks are undecomposable.
  \end{proof}

  With the limit behavior of the maps in $\solutions$ understood, we
  will now study the moduli space
  \begin{equation*}
    \moduli := \solutions \times \D^2 /\sim \;,
  \end{equation*}
  where we identify pairs $(u,z), (u',z') \in \solutions \times \D^2$,
  if and only if there is a Möbius transformation $\phi\in \Aut(\D^2)$
  such that $(u,z) = \bigl(u'\circ \phi^{-1}, \phi (z')\bigr)$.  Note
  that the action of $\Aut(\D^2)$ on $\solutions$ is proper and free
  because every map $u\in \solutions$ is injective along its boundary,
  and the identity is the only biholomorphism of $\D^2$ that keeps the
  boundary of the disk pointwise fixed.  It follows that $\moduli$ is
  a non-compact smooth $(n+2)$--dimensional manifold with boundary.
  The boundary corresponds to equivalence classes $[u,z] \in\moduli$
  with $z\in \partial \D^2$.  Proposition~\ref{prop:limitcurve} above
  allows to understand that the compactification of $\moduli$ is in
  fact a smooth compact manifold with boundary: If
  $\bigl([u_k,z_k]\bigr)_k$ is a sequence of elements in $\moduli$,
  and if the image of the maps $u_k$ stays at a finite distance from
  the binding $\{0\}\times B$, then we know that there is a
  subsequence $\bigl([u_{k_n},z_{k_n}]\bigr)_n$ and a family of
  reparameterizations $\phi_n\in \Aut(\D^2)$ such that $u_{k_n}\circ
  \phi_n^{-1}$ converges locally uniformly to a map $u_\infty \in
  \solutions$.  The subsequence $\bigl([u_{k_n}\circ \phi_n^{-1},
  \phi_n(z_{k_n}) ]\bigr)_n$ contains a further subsequence that
  converges to a proper element $[u_\infty, z_\infty]$ of the moduli
  space $\moduli$.

  If the image of a map $u_k$ intersects a small neighborhood $U$ of
  the binding in the symplectization, then it is up to
  reparameterization an element of the Bishop family.  Thus, when the
  image of the maps $u_k$ gets close to the binding $\{0\}\times B$,
  we can find a subsequence $\bigl([u_{k_n},z_{k_n}]\bigr)_n$ such
  that all the $u_{k_n}$ lie in the Bishop family.  Here, we can
  describe $\moduli$ and its closure explicitly.  The $E_{t_0,
    \POS_0}$--planes are all pairwise disjoint, hence we have that
  there is exactly one disk $[u,z]\in \moduli$ with $u(z) = p$ for
  every $p$ in the symplectization lying in the image of the Bishop
  family $\bigl\{(t,z;\POS,\mathbf{0})\bigm|\, \POS \in B,\, t < 1,\,
  z\in \C,\, \abs{z}^2 \le 1 - t^2 \bigr\}$.  Then the
  compactification of the Bishop family is naturally diffeomorphic to
  the smooth manifold with boundary
  \begin{equation*}
    \bigl\{(t,z;\POS,\mathbf{0})\bigm|\, \POS \in B,\, t \le 1,\,
    z\in \C,\, \abs{z}^2 \le 1 - t^2 \bigr\} \;.
  \end{equation*}
  There is a well-defined smooth evaluation map
  \begin{equation*}
    \evaluation\colon \overline{\moduli} \to \R\times M,
    \qquad [u,z] \mapsto u(z)
  \end{equation*}
  from the compactification of the moduli space into the
  symplectization.

  \begin{definition}
    The degree $\deg f \in \Z_2$ of a continuous map $f\colon X\to Y$
    between two closed $n$--manifolds $X$ and $Y$ is defined as the
    element $A\in \Z_2$ such that $f_\#[X] = A\,[Y] \in H_n(Y, \Z_2)$.
  \end{definition}

  For smooth maps it is easy to compute $\deg f$, because it suffices
  to take a regular value $y\in Y$ of $f$, and count
  \cite{EpsteinDegreeMap}
  \begin{equation*}
    \deg f = \# f^{-1}(y) \mod 2 \;.
  \end{equation*}
  Hence, it follows immediately that the restriction of the evaluation
  map to the boundary $\partial \overline{\moduli}$ of the moduli
  space is a smooth map
  \begin{equation*}
    \restricted{\evaluation}{\partial \overline{\moduli}}\colon
    \partial \overline{\moduli}  \to \{0\}\times N
  \end{equation*}
  of degree $1$ (as can be easily seen by using that close to the
  binding $\{0\}\times B$ there is for every $p\in\{0\}\times N$ a
  unique disk $[u,z]\in \partial \overline{\moduli}$ with $u(z) = p$).
  In particular by combining the trivial identity
  \begin{equation*}
    \evaluation \circ \, \iota_{\partial \overline{\moduli}}
    = \iota_N \circ \restricted{\evaluation}{\partial\overline{\moduli}}
  \end{equation*}
  for the standard inclusions $\iota_{\partial
    \overline{\moduli}}\colon \partial \overline{\moduli}
  \hookrightarrow \overline{\moduli}$ and $\iota_N\colon
  N\hookrightarrow \R\times M$, with the fact that $\partial
  \overline{\moduli}$ is null-homologous in $H_{n+1}
  (\overline{\moduli}, \Z_2)$, and using that $\evaluation \circ
  \iota_{\partial \overline{\moduli}}$ induces the trivial map on
  $H_{n+1} (\partial \overline{\moduli}, \Z_2)$, we obtain that
  \begin{equation*}
    \bigl(\iota_N\bigr)_\#\colon H_{n+1}(N, \Z_2) \to H_{n+1}(M, \Z_2)
  \end{equation*}
  vanishes, because $ \bigl(\restricted{\evaluation}{\partial
    \overline{\moduli}}\bigr)_\#$ is an isomorphism.  It follows that
  $N$ represents a trivial $(n+1)$--class in $H_{n+1}(M, \Z_2)$ as we
  wanted to show.

  \textbf{(ii)} If $N$ carries a Legendrian open book with boundary,
  we will proceed as follows: Choose close to the binding $\{0\}
  \times B$ on the symplectization the almost complex structure
  described above that allows us to find the Bishop family of
  holomorphic disks.

  In \cite[Section~5.3]{SizeTubularNeighborhood}, it was shown that we
  can find a specific almost complex structure on a neighborhood of
  the boundary $\{0\}\times \partial N \cong \S^1 \times L$ that
  prevents any holomorphic disk to enter this area.  After choosing
  these two almost complex structures, close to the binding $B$ and to
  the boundary $\partial N$, extend them to a global almost complex
  structure $J$ on $\R \times M$ that is compatible with the
  symplectic form $d(e^t\,\alpha)$, and for which $dt\circ J =
  \alpha$.  Additionally, we require $J$ to be $t$--invariant below a
  certain level set $\{-C\} \times M$ in the symplectization.

  Denote now $N \setminus (B\cup \partial N)$ by
  $\stackrel{\circ}{N}$, and study the space $\solutions$ of
  $J$--holomorphic maps
  \begin{equation*}
    u\colon \bigl(\D^2, \partial \D^2\bigr) \to
    \Bigl( (-\infty, 0] \times M,\,
    \{0\}\times \stackrel{\circ}{N} \Bigr) \;,
  \end{equation*}
  whose boundaries $u\bigl(\partial \D^2\bigr)$ transverse every page
  of the open book on $N$ exactly once.  If we assume that
  $(M,\alpha)$ does not have any contractible periodic Reeb orbits,
  then the compactness argument for sequences in $\solutions$ works as
  above, because there is an area around $\partial N$ where no
  holomorphic curves are allowed to enter.

  The moduli space, we will study now is given by
  \begin{equation*}
    \moduli := \solutions \times \S^1 /\sim \;,
  \end{equation*}
  where we identify pairs $(u,z), (u',z') \in \solutions \times \S^1$,
  if and only if there is a Möbius transformation $\phi\in
  \Aut(\D^2)$ such that $(u,z) = \bigl(u'\circ \phi^{-1}, \phi
  (z')\bigr)$.  By the arguments above, $\moduli$ is a smooth
  $(n+1)$--dimensional manifold with a smooth evaluation map
  \begin{equation*}
    \evaluation\colon \moduli \to \{0\}\times N, \qquad
    [u,z] \mapsto u(z) \;.
  \end{equation*}
  If we choose a generic (differentiable) path $\gamma\colon [0,1]\to
  N$ that connects a binding component of $B$ with a component of the
  boundary $\partial N$, and is such that $\gamma\bigl(]0,1[\bigr)
  \subset \stackrel{\circ}{N}$, then the evaluation map is transverse
  to $\gamma$. The pre-image $\evaluation^{-1}(\gamma)$ is a non-empty
  $1$--dimensional smooth submanifold of $\moduli$.  We only consider
  the component $\moduli_0$ of $\evaluation^{-1}(\gamma)$ that
  contains elements of the Bishop family.

  The closure of the submanifold $\moduli_0$ has one end that
  corresponds to the disks that collapse to a point on the binding,
  and so $\moduli_0$ cannot be a circle, but must be instead an
  interval.  The other end of the interval exists by Gromov
  compactness, but by our assumptions this limit curve will be a
  regular element of $\moduli_0$, so that in fact it is not the end of
  the interval leading to a contradiction, which implies the existence
  of a closed contractible Reeb orbit.
\end{proof}

We can generalize Theorem~\ref{thrm: main result proper open book} by
changing open books to \emph{covered} open books, let us start with
the definition.

\begin{definition}
  Let $N$ be a closed manifold with universal cover $\pi\colon
  \widetilde N \to N$.  A pair $(\theta, B)$ consisting of a closed
  codimension~$2$ submanifold $B$ of $N$, and a proper fibration
  $\theta\colon (N\setminus B) \to \S^1$, is called a
  \textbf{$k$--fold covered open book decomposition} of $N$ if it
  induces an open book decomposition on the universal cover
  $\widetilde N$.  More precisely, we require that there is an open
  book decomposition $(\widetilde \theta, \widetilde B)$ on
  $\widetilde N$, where the binding $\widetilde B$ is $\pi^{-1}(B)$,
  and where the fibration $\widetilde \theta\colon \widetilde N
  \setminus \widetilde B \to \S^1$ commutes with $\pi$ and $z\mapsto
  z^k$ according to the following diagram:
  \begin{equation*}
    \begin{diagram}
      \node{\widetilde N \setminus \widetilde B}
      \arrow{e,t}{\widetilde \theta} \arrow{s,l}{\pi}
      \node{\S^1} \arrow{s,r}{z\mapsto z^k}   \\
      \node{N \setminus B} \arrow{e,t}{\theta}\node{\S^1}
    \end{diagram}
  \end{equation*}
\end{definition}

\begin{definition}
  Accordingly we say that a maximally foliated submanifold $N$ of a
  contact manifold carries a \textbf{Legendrian covered open book}, if
  the maximal foliation on $N$ defines a covered open book
  decomposition of $N$.

  If $N$ is a maximally foliated compact submanifold with boundary in
  a contact manifold $(M,\xi)$, then we say that $\xi$ induces a
  \textbf{Legendrian covered open book with boundary} if the foliation
  on the interior of $N$ defines a covered open book, and if it
  satisfies close to the boundary the same conditions as a proper
  Legendrian open book with boundary.
\end{definition}

\begin{example}
  Note that a proper open book decomposition $(\theta, B)$ of a
  manifold $N$ is a $1$--fold covered open book decomposition, as the
  open book on $\widetilde N$ will be given by $\widetilde \theta =
  \theta\circ \pi$, and $\widetilde B = \pi^{-1}(B)$.

  But this of course does not imply that $\widetilde N$ is a $1$--fold
  cover of $N$, as the following example shows: The standard open book
  decomposition on $\S^2$ (see Fig.~\ref{fig: foliation sphere})
  induces in an obvious way an open book decomposition on the manifold
  $\S^1 \times \S^2$, and its universal cover $\R\times \S^2$.
\end{example}

\begin{example}\label{example: std open books}
  The unit sphere $\S^{n-1} = \bigl\{(x_1,\dotsc, x_n)\in \R^n\bigm|\,
  x_1^2 + \dotsm + x_n^2 = 1 \bigr\}$ admits an open book with binding
  $B = \bigl\{(x_1,\dotsc, x_n)\in \S^{n-1}\bigm|\, x_1= x_2 = 0
  \bigr\}$, and fibration map
  \begin{equation*}
    \theta\colon \S^{n-1} \setminus B \to \S^1 , \qquad (x_1, \dotsc, x_n)
    \mapsto \frac{(x_1, x_2)}{\sqrt{x_1^2 + x_2^2}} \;.
  \end{equation*}
  The binding is an $(n-3)$--sphere, and the pages are $(n-2)$--balls
  (see Fig.~\ref{fig: foliation sphere}).

  The real projective space $\RP^{n-1}$ can be obtained as the
  quotient of the unit sphere $\S^{n-1}$ by the antipodal map
  \begin{equation*}
    A\colon \S^{n-1}\to \S^{n-1}, \qquad (x_1, \dotsc, x_n) \mapsto
    (-x_1, \dotsc, -x_n) \;.
  \end{equation*}
  The open book on $\S^{n-1}$ described above projects onto a covered
  open book of $\RP^{n-1}$ with binding $B' = \bigl\{[0:0:x_3:\dotsm:
  x_n]\in \RP^{n-1} \bigr\} \cong \RP^{n-3}$, and fibration map
  \begin{equation*}
    \theta'\colon \RP^{n-1} \setminus B' \to \S^1 , \qquad
    [x_1: \dotsm : x_n] \mapsto
    \frac{(x_1^2-x_2^2, 2 x_1x_2)}{x_1^2 +x_2^2} \;,
  \end{equation*}
  which is induced by the square of $\theta$.  The pages of this open
  book are still $(n-2)$--balls, but the monodromy is the antipodal
  map, and going around the binding once corresponds to crossing all
  pages twice (see Fig.~\ref{fig: foliation projective space}).  This
  way we obtain a $2$--fold covered open book of $\RP^{n-1}$ that is
  \emph{not} a proper open book decomposition.
\end{example}

\begin{figure}[htbp]
  \centering
  \includegraphics[width=3cm,keepaspectratio]{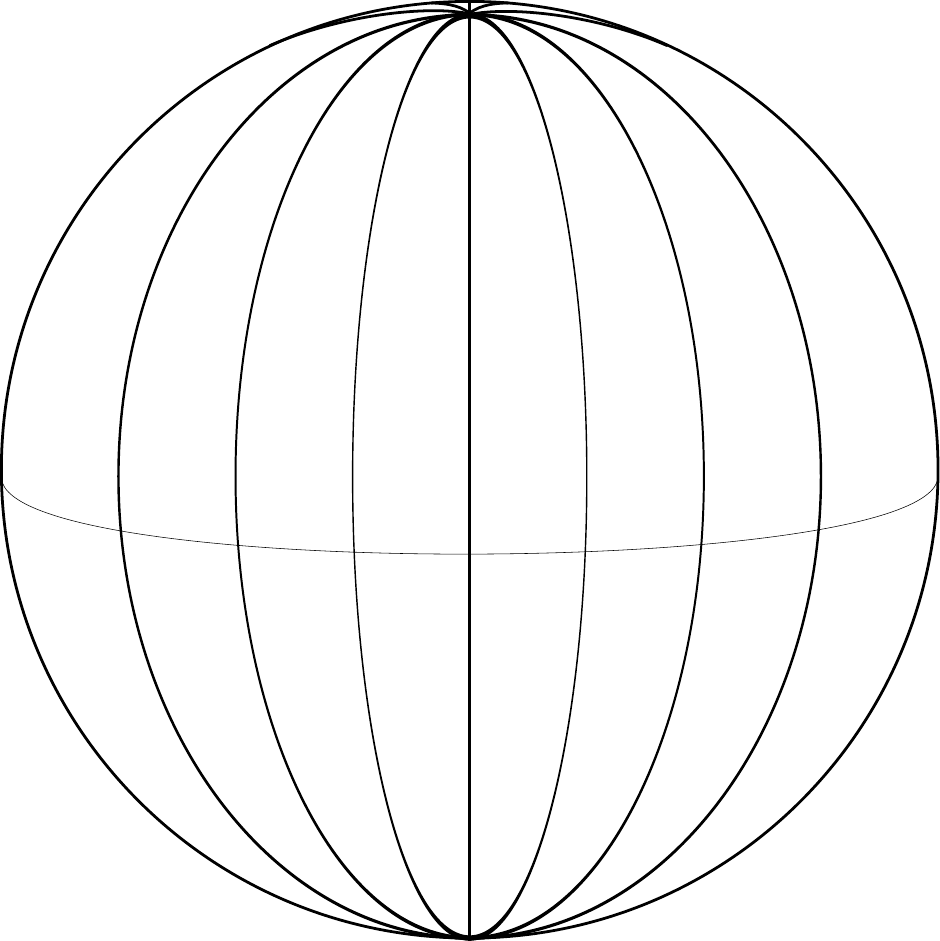}
  \caption{The standard open book on the $2$--sphere.}\label{fig:
    foliation sphere}
\end{figure}

\begin{figure}[htbp]
  \centering
  \includegraphics[width=3cm,keepaspectratio]{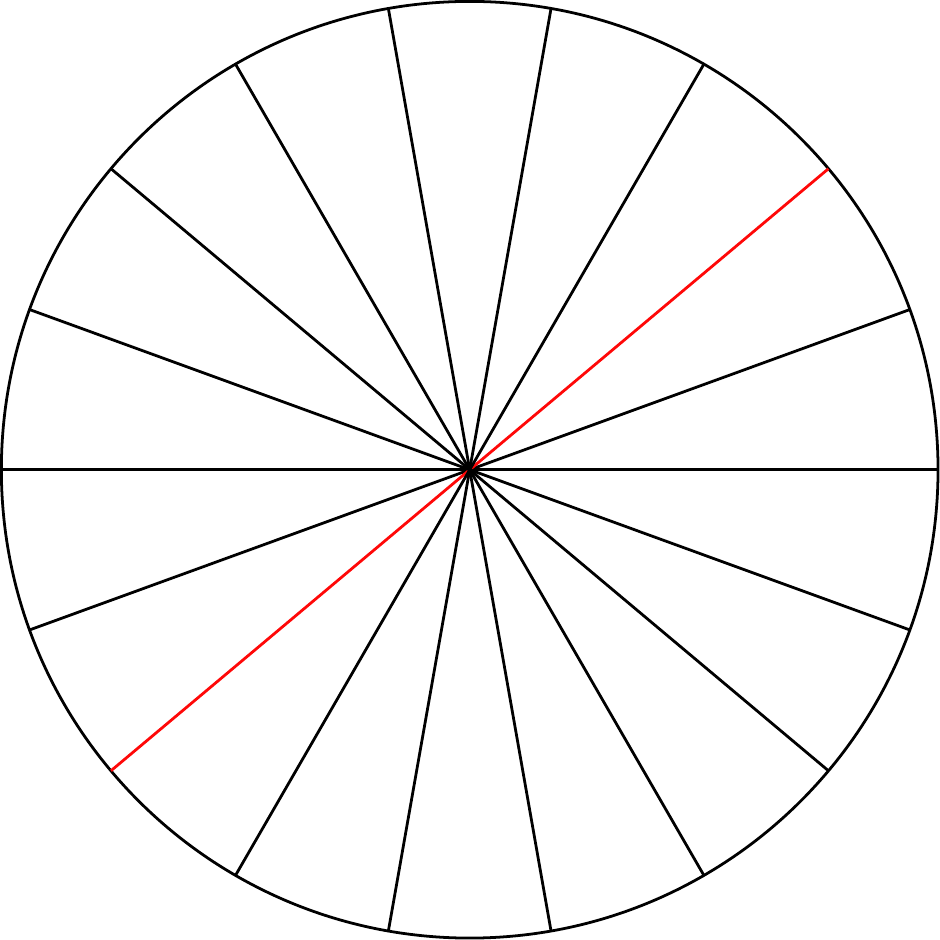}
  \caption{The induced covered open book on $\RP^2$.  The boundary of
    the disk is identified under the antipodal map, so that for
    example the line drawn in red represents a single page that
    touches the binding at both of its boundaries.}\label{fig:
    foliation projective space}
\end{figure}

\begin{definition}
  Let $N$ be a closed submanifold of a contact manifold $(M, \xi)$,
  and assume that $\xi$ induces a Legendrian $k$--fold covered open
  book on $N$.  We say that $N$ is \textbf{nucleation free}, if every
  loop $\gamma\colon \S^1 \to N\setminus B$ that projects via
  $\theta\colon N\setminus B \to \S^1$ to a generator of
  $\pi_1(\S^1)$, represents in $\pi_1(M)$ an element that is at least
  of order~$k$.
\end{definition}

\begin{remark}
  The reason for our definition of nucleation free is that it excludes
  bubbling of certain holomorphic disks in the symplectization
  $\R\times M$.  The boundary of every non-constant holomorphic disk
  $u$ that is attached with $\partial u$ to the submanifold
  $\{0\}\times N$ will always have positive transverse intersections
  with the pages of the covered open book.  Since $\partial u$ is also
  clearly contractible in $M$, it follows that $\theta(\partial u)
  \subset \S^1$ will make a (positive) multiple of $k$~turns in the
  covered open book.
  
  If we then choose a sequence of holomorphic disks $(u_n)_n$ such
  that each one intersects every page of the open book exactly $k$
  times, the limit curve of $(u_n)_n$ cannot decompose into several
  non-constant holomorphic disks $v_1,\cdots,v_N$, because the
  boundary of these curves would describe loops in $N\setminus B$ that
  are contractible in $M$, but that make strictly less than $k$ turns
  in the covered open book.
\end{remark}

\begin{theorem}\label{thrm: main result covered open book}
  Let $(M,\xi)$ be a closed contact manifold, and let $N$ be a compact
  submanifold.
  \begin{itemize}
  \item [(i)] If $\xi$ admits a contact form $\alpha$ without closed
    contractible Reeb orbits, and if it induces a Legendrian covered
    open book on $N$ that is nucleation free, then $N$ represents the
    trivial homology class in $H_{n+1}(M, \Z_2)$.
  \item [(ii)] If $\xi$ induces on $N$ a Legendrian covered open book
    with boundary that is nucleation free, then every contact form
    $\alpha$ of $(M, \xi)$ has a closed contractible Reeb orbit.
  \end{itemize}
\end{theorem}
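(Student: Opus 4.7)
My plan is to adapt the proof of Theorem~\ref{thrm: main result proper open book} essentially verbatim, replacing the condition ``boundary crosses each page exactly once'' by ``boundary crosses each page exactly $k$ times,'' and using the nucleation-free hypothesis in place of the simple topological obstruction to bubbling that was used in the proper case.

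For part~(i), I would begin by constructing the almost complex structure $J$ on the symplectization exactly as before. The germ of the contact form near the binding $B$ depends only on the Legendrian foliation, so the local model from \cite[Section~3]{NiederkruegerPlastikstufe} remains valid; the only change is that the fibration $\theta$ is now locally of the form $z\mapsto (z/\abs{z})^k$ on the transverse $\C$-slice at each binding point. The Bishop family is still well-defined, but now each Bishop disk boundary encircles the binding geometrically once and therefore wraps $k$ times around $\S^1$ under $\theta$. I would then set $\solutions$ to be the space of $J$-holomorphic disks $u\colon(\D^2,\p\D^2)\to((-\infty,0]\times M,\{0\}\times\stackrel{\circ}{N})$ whose boundary crosses every page of the covered open book exactly $k$ times (restricted to the component containing the Bishop family), and verify the energy bound $E_\alpha(u)\leq 2\pi k\max_{x\in N} f(x)$, where the factor $k$ comes from the winding.

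The core new step is the Gromov compactness statement. As in Proposition~\ref{prop:limitcurve}, I must exclude sphere bubbles (not present in symplectizations) and finite energy planes (forbidden by the hypothesis of no contractible Reeb orbits); the novel task is to exclude disk bubbling. If a Gromov limit of a sequence in $\solutions$ decomposed into disks $v_1,\dots,v_N$ with $N\geq 2$, the total $\theta$-winding of the boundaries would still equal $k$, so some leaf $v_\ell$ would carry a boundary loop $\p v_\ell\subset N\setminus B$ with $\theta$-winding $w_\ell$ strictly between $0$ and $k$. Since $\p v_\ell$ is contractible in $M$ (bounding $v_\ell$), the nucleation-free condition is meant exactly to produce the desired contradiction, as sketched in the remark following its definition. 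This is the step where I expect the main technical difficulty: reducing the non-contractibility of an arbitrary leaf boundary of winding $w_\ell<k$ to the order-at-least-$k$ property of a winding-one generator requires some manipulation in $\pi_1(N\setminus B)$, using in particular the meridional loops around $B$ which are contractible in $M$ and have $\theta$-winding exactly $k$.

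With compactness secured, the moduli space $\moduli:=\solutions\times\D^2/\Aut(\D^2)$ is a smooth compact $(n+2)$-manifold with boundary, whose boundary evaluation map to $\{0\}\times N$ still has degree one (since the Bishop boundaries embed into a neighborhood of $B$ in $N$, each point near $B$ admitting a unique boundary disk through it). The same diagram chase as in Theorem~\ref{thrm: main result proper open book}.(i) then forces $[N]=0\in H_{n+1}(M,\Z_2)$. For part~(ii), I would reuse the argument of Theorem~\ref{thrm: main result proper open book}.(ii) verbatim, with the same nucleation-free replacement of the bubble-exclusion step: the one-dimensional moduli component $\moduli_0\subset\evaluation^{-1}(\gamma)$ must be an interval with a Bishop-collapse end, but the other end cannot be realized without producing a closed contractible Reeb orbit, yielding the conclusion.
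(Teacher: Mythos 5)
Your overall strategy coincides with the paper's: run the proof of Theorem~\ref{thrm: main result proper open book} with disks whose boundaries cross each page $k$ times, and use the nucleation-free hypothesis to rule out disk bubbling. However, there is a concrete gap at the very first step, where you assert that the local model near the binding ``remains valid'' and that ``the only change is that the fibration $\theta$ is now locally of the form $z\mapsto (z/\abs{z})^k$.'' For a $k$-fold covered open book the binding $B\subset N$ need \emph{not} have trivial normal bundle, and the tubular neighborhood of $B$ need not be a product $B\times\C$: the deck group acts on the normal $\C$-fibers upstairs by $k$-th roots of unity, so downstairs one gets a possibly twisted disk bundle (already for $B'\cong\RP^{n-3}\subset\RP^{n-1}$ in Example~\ref{example: std open books}). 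Consequently the normal form $\R^3\times T^*B$ from \cite[Section~3]{NiederkruegerPlastikstufe}, and with it the explicit product model $\C^2\times T^*B$ in which the Bishop family and the intersection hypersurfaces $S_{z_0}$ are written down, do not apply verbatim in $N$ itself. The paper resolves exactly this issue by lifting: it passes to the universal cover $\widetilde N$, where the covered open book becomes a genuine open book with binding $\widetilde B=\pi^{-1}(B)$, checks that the Moser-trick contactomorphism and the almost complex structure $J$ can be made equivariant under the deck group $G=\pi_1(N)$ (taking care that $\widetilde B$ may be non-compact), and only then projects the Bishop family, the hypersurfaces, and $J$ back down to $\R\times M$. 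Your plan needs either this equivariant lift or a reworking of the normal form and Bishop construction for twisted normal bundles; as written it silently assumes a product structure that is generally absent.

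The remaining ingredients of your proposal match the paper: the energy bound picks up a factor of $k$, the Bishop boundaries wind $k$ times around the pages while encircling $B$ geometrically once (so the boundary evaluation map still has degree one), and parts (i) and (ii) conclude exactly as before. On the bubbling step you candidly flag that deducing non-contractibility of a boundary loop of intermediate winding from the order-at-least-$k$ condition on winding-one loops requires an argument you have not supplied; be aware that the paper's own justification (in the remark following the definition of nucleation free) is equally brief, resting on the claim that the boundary of any non-constant disk attached to $\{0\}\times N$ has positive winding and, being contractible in $M$, must wind a positive multiple of $k$ times, so that a limit configuration with at least two non-constant disk components would have total winding at least $2k>k$. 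Making that claim precise is a point you would need to address in either approach, but it is not a divergence from the paper.
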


\begin{remark}
  Note that the conditions in Theorem~\ref{thrm: main result covered
    open book}.(ii) do not imply the non-fillability of $M$.
  Nonetheless, it implies that there is a loop in $N\setminus B$ that
  projects via $\theta$ onto a positive generator of $\S^1$, and
  represents in the \emph{filling} $W$ of $M$ an element of $\pi_1(W)$
  of order strictly less than $k$.
\end{remark}

\begin{proof}
  We follow the lines of the proof of Theorem~\ref{thrm: main result
    proper open book}, but several details have to be adjusted to the
  new situation.  To find a Bishop family around the binding $B$, we
  will construct a model around the binding $\widetilde B$ in the
  cover, perform all steps as in Theorem~\ref{thrm: main result proper
    open book}, and finally show that $\pi\colon \widetilde N \to N$
  induces similar results in the base.

  Note that the fundamental group $G := \pi_1(N)$ acts by deck
  transformations on $\widetilde N$, and that $\widetilde N/G \cong
  N$.  We will identify a tubular neighborhood of $N$ in $M$ with a
  neighborhood $U$ of the $0$--section in the normal bundle $\nu N$.
  The universal cover of $\nu N$ is just given by the pull-back bundle
  $\pi^{-1}(\nu N)$ over $\widetilde N$, and so we find a neighborhood
  $\widetilde U$ of the $0$--section of $\pi^{-1}(\nu N)$ such that
  $\widetilde U / G = U$.  We can also pull-back the contact form
  $\restricted{\alpha}{U}$ to a $G$--invariant contact form
  $\widetilde \alpha$ on $\widetilde U$.

  The contact form $\widetilde \alpha$ induces on $\widetilde N$ an
  open book decomposition, and in principle we can use
  \cite[Section~3]{NiederkruegerPlastikstufe} to obtain a neighborhood
  of the binding $\widetilde B$ strictly contactomorphic to a
  neighborhood of the $0$--section in $\R^3\times T^* \widetilde B$
  with the contact form $dz + \frac{1}{2}\, (x\,dy - y\,dx) + \lcan$.
  We need to be a bit more careful though, because $\widetilde B$ does
  not need to be compact.  But the construction of this
  contactomorphism is based on the Moser trick, and a closer
  inspection of the proof shows that not only does this
  contactomorphism exist, but that it is even $G$--equivariant: where
  $G$ acts on the $T^*\widetilde B$--factor by the linearization of
  the $G$--action on $\widetilde B$, and on the $\R^3$--factor by
  linear transformations leaving the $z$--direction invariant.

  The symplectization $\R\times \widetilde U$ is the universal cover
  of the symplectization $\R\times U$.  The fundamental group $G =
  \pi_1(N)$ acts trivially on the $\R$--factor, and thus respects the
  symplectic form $d(e^t\,\widetilde \alpha)$. It is also not
  difficult (though tiresome) to check that the almost complex
  structure $J$ constructed in
  \cite[Section~3]{NiederkruegerPlastikstufe} is also $G$--invariant.

  As in the Proof of Theorem~\ref{thrm: main result proper open book},
  we find in $\R\times \widetilde U$ a Bishop family of
  $J$--holomorphic disks, and also the corresponding family of
  codimension~$2$ almost complex submanifolds $S_z$ used for the
  intersection argument.  Furthermore since $J$ is $G$--invariant, it
  follows that $G$ maps each of these families into itself, and so we
  may project the almost complex structure $J$, and these families
  into the symplectization $\R\times M$.

  The boundary of the Bishop disks in $\R\times M$ intersect each page
  of the covered open book exactly $k$--times, and we supposed that
  $N$ is nucleation free so that bubbling is not possible.  This way
  the rest of the proof is now exactly as the one of
  Theorem~\ref{thrm: main result proper open book}.
\end{proof}

\section{Examples and applications}

The main difficulty consists in finding a situation where we can
apply Theorems~\ref{thrm: main result proper open book} and \ref{thrm:
  main result covered open book} to prove the Weinstein conjecture.

Note that it is easy to find examples of submanifolds with an induced
Legendrian open book in any Darboux chart.  For example, it is easy to
see that $\S^{n+1}$ can be embedded into $\bigl(\S^{2n+1},
\xi_0\bigr)$ via
\begin{equation*}
  (x_0,\dotsc,x_{n+1}) \in \S^{n+1} \hookrightarrow
  \bigl(x_0 + ix_1, x_2, \dotsc, x_{n+1}\bigr) \in \C^{n+1}
\end{equation*}
such that the standard contact form restricts to $x_0\, dx_1 -
x_1\,dx_0$ which clearly defines the canonical open book on $\S^{n+1}$
with an $n$--ball as a page, and with trivial monodromy.  Another
example was given in \cite[Section~5.2]{NiederkruegerPlastikstufe},
where it was shown that we can embed $\S^2\times \S^{n-1}$ in the
desired way into $\bigl(\R^{2n+1}, \xi_0\bigr)$.

On the other hand there are often evident obstructions to the
realization of a homology class by a maximally foliated submanifold as
an open book.  For example, the only closed $2$--dimensional manifolds
that admit a proper or a covered open book decomposition are $\S^2$
and $\RP^2$.  The reason for this is that if $\Sigma$ is a closed
surface that admits a (covered) open book, then we can lift the
rotational vector field $\partial_\phi$ from $\S^1$ to $\Sigma$, and
obtain a vector field whose index is positive at each of its
singularities.  By the Poincaré-Hopf theorem it follows that the Euler
characteristic of $\Sigma$ has to be positive, but the only compact
surfaces that have positive Euler characteristic are $\S^2$ and
$\RP^2$.  Hence for purely topological obstructions, we obtain that
$\T^3$ (or for example a hyperbolic $3$--manifold) does not contain
any embedded non-nullhomologous $2$--sphere or real projective
$2$--space, because both would have to lift to a non-nullhomologous
$\S^2$ in $\R^3$.

But it is also easy to give contact topological obstructions, because
there are many contact manifolds that do not have contractible Reeb
orbits as the following examples will show.

\begin{example}\label{expl: unit cotangent bundle torus}
  Let $(M,\xi)$ be the unit cotangent bundle $\S(T^*\T^n)$ of the
  torus with its canonical contact structure.  We can identify $M$
  with $\T^n \times \S^{n-1}$ with coordinates $(x_1,\dotsc,x_n) \in
  \T^n$ and $(y_1,\dotsc,y_n) \in \S^{n-1}$ and write the canonical
  $1$--form as
  \begin{equation*}
    \lcan =    \sum_{j=1}^n y_j\, dx_j \;.
  \end{equation*}
  The Reeb field for this form is $R = \sum_j y_j\,\partial_{x_j}$,
  and so it follows that the orbits move in constant direction along
  the torus, and hence there will not be any closed contractible Reeb
  orbits.  In particular it follows that it is not possible to embed
  any manifold with a Legendrian open book into $\bigl(\S(T^*\T^n),
  \lcan\bigr)$ that represents a non-trivial class in
  $H_{n+1}\bigl(\S(T^*\T^n), \Z_2\bigr)$.
\end{example}

After having described some of the problems of our method, we will
give some positive examples.

\begin{example}\label{expl: subcritical Stein}
  Let $(M,\xi)$ be a contact manifold that is subcritically Stein
  fillable, that means it can be filled by a Stein manifold of the
  form $(\C\times W, dx\wedge dy + d\lambda)$, where $(W,d\lambda)$ is
  a $2n$--dimensional Stein manifold.  Then it follows that $(M,\xi)$
  admits an open book with page $W$ and trivial monodromy consisting
  of taking the angular coordinate on the $\C$--factor of $\C\times W$
  as a fibration over $\S^1$.

  Any properly embedded Lagrangian submanifold $L$ in $W$ gives rise
  to an $(n+1)$--submanifold $N$ of $M$ that is foliated as a
  Legendrian open book.  In fact, $N$ is obtained by taking the
  intersection of $\C\times L \subset \C\times W$ with the convex
  boundary $M$.  Another way to describe the construction is by saying
  that we take the product of $L$ with $\S^1$, and then close this off
  by adding $\partial L \times \D^2$ in a neighborhood of the binding
  of $M$.

  Unfortunately this manifold will often be homologically trivial.  We
  can avoid this problem if $W$ is a Stein manifold with
  plurisubharmonic Morse function $h\colon W\to [0,\infty)$, and if
  the highest critical point $p_0$ is of index $n$, because then we
  can take for $L$ the unstable manifold of $p_0$ which will be a
  Lagrangian plane which intersects the skeleton of $W$ only in $p_0$.
  This way, we obtain for $N$ a sphere with the standard Legendrian
  open book decomposition, and the intersection between $N$ and the
  skeleton of any page is $1$, so that $[N]$ may not be trivial in
  $H_{n+1}(M,\Z_2)$, and we can apply our theorem to find a
  contractible Reeb orbit.

  The easiest examples that fit into this situation are unit bundles
  of $\C \oplus T^*S$ for any closed manifold $S$.  To be even more
  explicit, take the contact structure $\xi$ on $\T^n \times \S^{n+1}$
  given by
  \begin{equation*}
    \xi = \ker \Bigl(\sum_{j=1}^n y_j\,dx_j
    + \frac{1}{2}\,\bigl(y_{n+1}\, dy_{n+2}
    - y_{n+2}\, dy_{n+1}\bigr)\Bigr)
  \end{equation*}
  with $(x_1,\dotsc, x_n)$ the coordinates on $\T^n$, and
  $(y_1,\dotsc,y_{n+2})$ the coordinates on $\S^{n+1}$.  Here, any
  sphere $\{\mathbf{x}\} \times \S^{n+1}$ is foliated by a Legendrian
  open book, and we obtain, in contrast to Example~\ref{expl: unit
    cotangent bundle torus}, that $(\T^n \times \S^{n+1}, \xi)$ always
  has a closed contractible Reeb orbit.

  Similarly the contact structure on $\S^n \times \S^{n+1}$ given by
  using the trivial open book with page $T^*\S^n$ and trivial
  monodromy also always has a closed contractible Reeb orbits.
\end{example}

\begin{example}\label{expl: real projective}
  The most obvious example, where we find a submanifold with a
  Legendrian covered open book is the real projective space with the
  standard contact structure
  \begin{equation*}
    \Bigl( \RP^{2n-1} = \bigl\{[x_1:\dotsm:x_n: y_1: \dotsm: y_n] \bigm|\,
    \sum_j (x_j^2+ y_j^2) = 1 \bigr\}, \quad \xi_0 := \ker
    \sum_{j=1}^n \bigl( x_j\,dy_j - y_j\,dx_j \bigr)\Bigr)
  \end{equation*}
  given as the quotient of the standard contact sphere $\S^{2n-1}$ by
  the antipodal map.  The submanifold $\bigl\{[x_1:\dotsm
  :x_n:x_{n+1}:0:\dotsm:0] \bigr\} \cong \RP^{n+1}$ represents the
  non-trivial class in $H_n(\RP^{2n-1}, \Z_2)$, and carries the
  covered open book described in Example~\ref{example: std open
    books}.  It follows from Theorem~\ref{thrm: main result covered
    open book} that any contact form for $\xi_0$ admits a contractible
  closed Reeb orbit.
\end{example}

Note that the Weinstein conjecture for Example~\ref{expl: real
  projective} is well known, because it has already been proved a long
time ago for the standard contact structure on the unit sphere
\cite{RabinowitzPeriodicSolutions}.  Similarly the Weinstein
conjecture for subcritically fillable manifolds can be proved in
general (that means without imposing the condition on the critical
points) by using the technically much more difficult results from SFT
\cite{YauSubcriticalStein}.

Still, we believe that our results have some value in depending only
on local information, for example it is easy to prove:

\begin{lemma}
  Let $(M_1,\xi_1)$ and $(M_2,\xi_2)$ be two closed cooriented contact
  manifolds with $\dim M_1 = \dim M_2$.  If $(M_1,\xi_1)$ satisfies
  the conditions of Theorem~\ref{thrm: main result proper open book}
  or~\ref{thrm: main result covered open book}, then any contact form
  on the connected sum
  \begin{equation*}
    \bigl(M_1\connsum M_2, \xi_1\connsum \xi_2\bigr)
  \end{equation*}
  has a contractible Reeb orbit.
\end{lemma}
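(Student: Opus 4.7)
The plan is to exploit the locality of the contact connected sum and transfer the hypotheses of Theorem~\ref{thrm: main result proper open book} or Theorem~\ref{thrm: main result covered open book} from $(M_1,\xi_1)$ to $(M_1\connsum M_2,\xi_1\connsum\xi_2)$. The submanifold $N\subset M_1$ supplied by the hypothesis is compact, so I may choose a point $p\in M_1\setminus N$ and a Darboux ball $U\ni p$ with $U\cap N=\emptyset$, and perform the contact connected sum with $M_2$ using this ball. The resulting contact manifold contains $N$ as a submanifold on a neighborhood of which the contact structure is unmodified, so the Legendrian (possibly covered) open book on $N$ — with or without boundary — is inherited verbatim.

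In the case where $N$ carries a Legendrian (covered) open book \emph{with boundary}, this already suffices: applying Theorem~\ref{thrm: main result proper open book}.(ii) or Theorem~\ref{thrm: main result covered open book}.(ii) directly to $N \subset (M_1\connsum M_2,\xi_1\connsum\xi_2)$ yields a closed contractible Reeb orbit for every contact form.

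In the boundary-less situation, one must verify in addition that $[N]$ remains nonzero in $H_{n+1}(M_1\connsum M_2,\Z_2)$. The cleanest route is $\Z_2$--Poincaré duality on the closed manifold $M_1$: I choose an $n$-cycle $\gamma$ in $M_1$ with intersection number $[\gamma]\cdot[N]=1\in\Z_2$, perturb it off the ball $U$, and observe that it then represents a class in $H_n(M_1\connsum M_2,\Z_2)$ whose intersection with $[N]$ is still odd, so that $[N]$ stays nontrivial. For the covered-open-book case one also needs nucleation-freeness to persist in $M_1\connsum M_2$; this follows immediately from van~Kampen applied to the decomposition $M_1\connsum M_2 = (M_1\setminus \interior U) \cup_{\S^{2n}} (M_2\setminus \interior U')$, since $\pi_1(\S^{2n})=0$ for $n\ge 1$ identifies $\pi_1(M_1\setminus U)\cong \pi_1(M_1)$ as a free factor of $\pi_1(M_1\connsum M_2)$, so the order of every element in the image of $\pi_1(N)$ is preserved. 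The contrapositive of part~(i) of the corresponding theorem then provides the contractible Reeb orbit.

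I do not foresee a serious obstacle: all three steps consist of topological bookkeeping or direct appeals to the main theorems. The only point deserving any care is the localization of the connected-sum construction to a Darboux ball disjoint from $N$, which is immediate from the compactness of $N$ together with the existence of Darboux charts at every point.
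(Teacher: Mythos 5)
Your proof is correct; the paper actually states this lemma without proof (remarking only that ``it is easy to prove''), and your argument --- performing the contact connected sum in a Darboux ball disjoint from the compact submanifold $N$, so that the Legendrian (covered) open book survives verbatim, and then checking that non-triviality of $[N]$ in $H_{n+1}(\,\cdot\,,\Z_2)$ and nucleation-freeness persist --- is evidently the intended local argument, with all the relevant points (the intersection-number computation for the homology class, and the free-product/van Kampen step for the order of loops, both valid since the gluing sphere $\S^{2n}$ is simply connected) handled correctly. The only cosmetic remark is that the homological step can be obtained slightly more directly from the Mayer--Vietoris splitting $H_{n+1}(M_1\connsum M_2,\Z_2)\cong H_{n+1}(M_1,\Z_2)\oplus H_{n+1}(M_2,\Z_2)$ or from the degree-one collapse map $M_1\connsum M_2\to M_1$, but your Poincar\'e-duality argument is equally valid.
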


Similar results can be obtained for other surgeries, but they require
a more careful analysis in each situation.

\bibliography{main}


\end{document}